\numberwithin{equation}{section}
\newtheorem{thm}[equation]{Theorem}
\newtheorem{cor}[equation]{Corollary}
\theoremstyle{definition}
\newtheorem{example}[equation]{Example}
\DeclareMathOperator{\Mod}{mod}
\begin{document}

\title{Congruences for the number of partitions and bipartitions with distinct even parts}
\author{
        Haobo Dai\footnote{Department of Mathematics,
University of Shanghai Jiao Tong University, Shanghai, 200240,
China; e-mail: {\tt dedekindbest@hotmail.com}}
}

\date{}
\maketitle

\begin{abstract}
Let $ped(n)$ denote the number of partitions of $n$ wherein even parts are distinct (and odd parts are unrestricted). We show infinite families of congruences for $ped(n)$ modulo $8$. We also examine the behavior of $ped_{-2}(n)$ modulo $8$ in detail where $ped_{-2}(n)$ denotes the number of bipartitions of $n$ with even parts distinct. As a result, we find infinite families of congruences for $ped_{2}(n)$ modulo $8$.
\\

\noindent\textbf{Keywords}\quad   partitions and bipartitions with even parts distinct; congruences; binary quadratic forms. \\

  \noindent\textbf{Mathematics Subject Classification (2000)}: 05A17;11P83
\end{abstract}

\section{Introduction}

Let $ped(n)$ denote the number of partitions of $n$ wherein even parts are distinct (and odd parts are unrestricted). The generating function for $ped(n)$ (\cite{AHS}) is
$$\sum_{n=0}^{\infty}ped(n)q^n:=\frac{(q^4;q^4)_{\infty}}{(q;q)_{\infty}}=\prod_{m=1}^{\infty}\frac{(1-q^{4m})}{(1-q^m)}. \eqno(1.1)$$
Note that by (1.1), the number of partitions of $n$ wherein even parts are distinct equals the number of partitions of $n$ with no parts divisible by $4$, i.e., the $4$-regular partitions (see \cite{AHS} and references therein).
The arithmetic properties were studied by Andrews, Hirshhorn and Sellers \cite{AHS} and Chen \cite{Ch}. For example,  in \cite {AHS}, Andrews, et al., proved that for all $n\geq 0$,
$$ped(9n+4)\equiv 0 \quad (\Mod 4)  \eqno(1.2)$$
and
$$ped(9n+7)\equiv 0 \quad (\Mod 4).\eqno(1.3)$$
Suppose that $r$ is an integer such that $1\leq r <8p$, $rp\equiv 1(\Mod 8)$, and $(r, p)=1$. By using modular forms, Chen \cite{Ch} showed that if $c(p)\equiv 0(\Mod 4)$, then, for all $n\geq 0$, $\alpha\geq 1$,
$$ped\left(p^{2\alpha}n+\frac{rp^{2\alpha-1}-1}{8}\right)\equiv 0  \quad (\Mod 4) \eqno(1.4)$$
where $c(p)$ is the $p$-th coefficient of $\frac{\eta^4(16z)}{\eta(8z)\eta(32z)}:=\sum_{n=1}^{\infty}c(n)q^n$. Note that in \cite{Ch}, Chen didn't show the coefficients of $c(p)$ explicitly. Note also that in a beautiful paper \cite{Ch1}, Chen studied arithmetic properties for the number of $k$-tuple partitions with even parts distinct modulo $2$ for any positive integer $k$ by using Hecke nilpotency.
Berkovich and Patane \cite{B-K} calculated $c(n)$ explicitly. In particular, they showed that $c(p)=0$ if and only if $p=2$, $p\equiv 5 (\Mod 8)$ and $p\equiv 3 (\Mod 4)$.

 As a direct application of Chen's, Berkovich and Patane's theorems, we have the following.

\begin{cor}
Let $p$ be a prime which is congruent  to 5 modulo $8$ or congruent to $3$ modulo $4$ and suppose that $r$ is an integer such that $1\leq r <8p$, $rp\equiv 1(\Mod 8)$, and $(r, p)=1$, then
$$ped\left(p^{2\alpha}n+\frac{rp^{2\alpha-1}-1}{8}\right)\equiv 0  \quad (\Mod 4)$$
for all $n\geq 0$ and $\alpha\geq 1$.
\end{cor}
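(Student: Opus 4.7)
The plan is simply to chain together the two results that have just been recalled. Chen's theorem (1.4) already gives the congruence
\[
ped\!\left(p^{2\alpha}n+\frac{rp^{2\alpha-1}-1}{8}\right)\equiv 0\pmod{4}
\]
under the single hypothesis $c(p)\equiv 0\pmod{4}$, where $c(p)$ is the $p$-th Fourier coefficient of the eta-quotient $\eta^{4}(16z)/\bigl(\eta(8z)\eta(32z)\bigr)$. Thus the entire task is to verify this divisibility hypothesis for every prime $p$ covered by the corollary, after which nothing remains to prove.

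First I would quote Berkovich and Patane's explicit evaluation: $c(p)=0$ if and only if $p=2$, or $p\equiv 5\pmod{8}$, or $p\equiv 3\pmod{4}$. The primes singled out in the statement of the corollary are exactly those in the last two congruence classes, so for any such $p$ we have $c(p)=0$, which in particular gives $c(p)\equiv 0\pmod{4}$. Note that the extra hypotheses on $r$ (namely $1\le r<8p$, $rp\equiv 1\pmod{8}$, $(r,p)=1$) are precisely the hypotheses carried along from Chen's theorem, so they are automatic in our setting and need no separate verification.

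Second, I would feed this into (1.4) verbatim: with $c(p)\equiv 0\pmod{4}$ established, Chen's theorem immediately yields the claimed congruence for all $n\ge 0$ and all $\alpha\ge 1$. Since both ingredients are available as black boxes, there is no genuine obstacle; the only thing to double-check is that the congruence classes written in the corollary really coincide with the vanishing locus of $c(p)$ from Berkovich--Patane, and that the $p=2$ case (which is irrelevant here since $r<8p$ and $rp\equiv 1\pmod 8$ force $p$ odd) is correctly excluded. Hence the proof is essentially a two-line deduction.
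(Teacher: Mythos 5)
Your proof is correct and is exactly the paper's argument: the corollary is stated there as ``a direct application of Chen's, Berkovich and Patane's theorems,'' i.e.\ the vanishing of $c(p)$ for $p\equiv 5\pmod 8$ or $p\equiv 3\pmod 4$ trivially gives $c(p)\equiv 0\pmod 4$, and Chen's congruence (1.4) then applies verbatim. Nothing further is needed.
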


Ono and Penniston \cite{O-P} showed an explicit formula for $Q(n)$ modulo $8$ by using the arithmetic of the ring of $\mathbb{Z}[\sqrt{-6}]$ where $Q(n)$ denotes the number of partitions of an integer $n$ into distinct parts. We are unable to explicitly determine $ped(n)$ modulo $8$. But we can prove infinitely families congruences for $ped(n)$ modulo $8$. Our first main result is the following.

\begin{thm}
Let $p$ be a prime which is congruent to $7 (\Mod 8)$. Suppose that $r$ is an integer such that $1\leq r <8p$, $rp\equiv 1(\Mod 8)$, and $(r, p)=1$, then for all $n\geq 0$, $\alpha\geq 0$, we have
$$ped\left(p^{2\alpha+2}n+\frac{rp^{2\alpha-1}+1}{8}\right)\equiv 0  \quad (\Mod 8).$$

\end{thm}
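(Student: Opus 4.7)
My plan is to sharpen Chen's mod $4$ argument \cite{Ch} into a mod $8$ argument by combining it with the \emph{genuine} (rather than merely mod $4$) vanishing of Chen's coefficients $c(p)$ at primes $p\equiv 7\pmod 8$, which follows from Berkovich--Patane \cite{B-K} because $p\equiv 7\pmod 8$ forces $p\equiv 3\pmod 4$, so $c(p)=0$.

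The first step is to establish a mod $8$ refinement of Chen's identity. Setting $E(z):=\sum_n ped(n)\,q^{8n+1}=\eta(32z)/\eta(8z)$ and $F(z):=\eta^4(16z)/(\eta(8z)\eta(32z))=\sum c(n)q^n$, Chen obtained $E(z)\equiv F(z)\pmod 4$ from $(q;q)_\infty^2\equiv(q^2;q^2)_\infty\pmod 2$. Iterating one order finer via the binomial identity $(1-q^n)^{2^k}\equiv(1-q^{2^k n})\pmod{2^k}$, I expect to obtain an explicit mod $8$ refinement
$$E(z)\equiv F(z)+4\,H(z)\pmod 8,$$
where $H(z)=\sum h(n)q^n$ is an explicit eta quotient. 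This is a routine, if tedious, power-series manipulation.

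The second step is to analyze the right-hand side at indices $N=p^{2\alpha+1}(8pn+r)$ with $\gcd(8pn+r,p)=1$. Because $F$ is (up to a twist) the theta series of a binary quadratic form of discriminant dividing $-32$ --- this is the structural content of Berkovich--Patane's formula for $c(n)$ --- the vanishing $c(p)=0$ means exactly that $p\equiv 7\pmod 8$ is inert in the associated imaginary quadratic field. The multiplicativity of $c(\cdot)$ then gives $c(p^{2\alpha+1})=0$ and hence $c(N)=c(p^{2\alpha+1})\,c(8pn+r)=0$ outright, not merely $\equiv 0\pmod 8$. The $F$-contribution therefore vanishes identically, and the theorem reduces to proving $h(N)\equiv 0\pmod 2$.

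The main obstacle is the analysis of the correction $H(z)$, which has no precedent in the existing literature. I expect $H(z)\pmod 2$ to collapse (via Jacobi's triple product) to the theta series of a binary quadratic form in which $p\equiv 7\pmod 8$ is again inert, whereupon $h(p^{2\alpha+1}M)\equiv 0\pmod 2$ follows by the same ideal-theoretic multiplicativity used for $c$. Should that geometric description fail, a backup route is to invoke Sturm's theorem on the mod $2$ reduction of $H$, reducing the required vanishing to a finite computation of initial coefficients. Pinning down $H(z)$ explicitly, and proving the mod $2$ vanishing of its coefficients along the thin progression indexed by $N$, is the heart of the proof; the iteration in $\alpha$ then drops out automatically from the multiplicativity at the inert prime $p$.
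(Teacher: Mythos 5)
Your overall strategy could probably be made to work, but as written it has a genuine gap at exactly the point you yourself call ``the heart of the proof'': the correction term $H(z)$ is never determined, and the required vanishing $h\left(p^{2\alpha+1}(8pn+r)\right)\equiv 0\pmod 2$ is only conjectured (``I expect $H(z)\pmod 2$ to collapse\ldots''). Nothing in the proposal pins down $H$, verifies that its mod $2$ reduction is governed by quadratic forms in which $p\equiv 7\pmod 8$ is inert, or explains how the Sturm backup would certify vanishing along the thin progression $p^{2\alpha+1}(8pn+r)$ uniformly in $\alpha$: a Sturm bound shows a fixed modular form vanishes identically mod $2$, not that an infinite family of sieved pieces does, unless you first prove each sieved piece is modular of controlled level and run an induction on $\alpha$. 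Unlike $F$, there is no reason for $H\bmod 2$ to inherit a Hecke eigenform structure making ``the iteration in $\alpha$ drop out automatically.'' The $F$-part of your argument (using Berkovich--Patane to get $c(p)=0$ for $p\equiv 7\pmod 8$ and Hecke multiplicativity to kill $c(p^{2\alpha+1}M)$) is fine, but it only recovers the known mod $4$ statement; the new mod $8$ content lives entirely in the unproved claim about $H$.

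For comparison, the paper avoids the two-stage decomposition $E\equiv F+4H$ and works elementarily. Writing
$$\sum_{n\ge0}ped(n)q^{8n+1}=\left(\sum_{n\ge0}q^{(2n+1)^2}\right)\cdot\left(1+2\sum_{n\ge1}(-1)^nq^{16n^2}\right)^{-1}$$
and expanding the reciprocal as $1-2S+4S^2\pmod 8$ with $S^2\equiv\sum_{n\ge1}q^{32n^2}\pmod 2$, it reduces $ped(n)\bmod 8$ to the number of representations of $8n+1$ as $x^2$, as $(2a+1)^2+16b^2$ (a sub-form of $x^2+y^2$), and as $(2a+1)^2+32b^2$ (a sub-form of $x^2+2y^2$). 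Since $p\equiv 7\pmod 8$ is inert in both $\mathbb{Z}[i]$ and $\mathbb{Z}[\sqrt{-2}]$ and divides $8n+1=p^{2\alpha+1}(8pm+r)$ to an odd power, all three counts vanish and the congruence follows with no modular-forms machinery. Had you computed your $H$ explicitly you would have found $H\equiv\left(\sum_{a\ge0} q^{(2a+1)^2}\right)\left(\sum_{b\ge1}q^{16b^2}+\sum_{b\ge1}q^{32b^2}\right)\pmod 2$, i.e.\ exactly the same quadratic-form data; that computation, which carries the actual content of the theorem, is the step missing from your write-up.
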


\begin{example}
For all $n\geq 0$, $\alpha\geq 0$,
$$ped\left(7^{2\alpha+2}n+\frac{r\times 7^{2\alpha+1}-1}{8}\right)\equiv 0  \quad (\Mod 8),$$
for $r=15,23,31,39$ and $47$.
\end{example}

Let $ped_{-2}(n)$ be the number of bipartitions of $n$ with even parts distinct. The generating function of $ped_{-2}(n)$ \cite{Lin} is
$$\sum_{n=0}^{\infty}ped_{-2}(n)q^n:=\frac{(q^4;q^4)^2_{\infty}}{(q;q)^2_{\infty}}=\prod_{m=1}^{\infty}\frac{(1-q^{4m})^2}{(1-q^m)^2}. \eqno(1.5)$$
Recently, in \cite{Lin}, Lin investigated arithmetic properties of $ped_{-2}(n)$. In particular, he showed following theorems:

\begin{thm}(\cite{Lin})
For $\alpha\geq 0$ and any $n\geq 0$, we have
$$ped_{-2}(n)\left(3^{2\alpha+2}n+\frac{11\times 3^{2\alpha+1}-1}{4}\right)\equiv 0 \quad (\Mod 3), $$
$$ped_{-2}(n)\left(3^{2\alpha+3}n+\frac{5\times 3^{2\alpha+2}-1}{4}\right)\equiv 0 \quad (\Mod 3). $$
\end{thm}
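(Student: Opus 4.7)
The plan is to work modulo $3$ with the generating function (1.5) and reduce both congruence families to an induction on $\alpha$ via an iterated $3$-dissection of a suitable eta-quotient.

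As a first step, I would apply the elementary congruence $f_k^3\equiv f_{3k}\pmod 3$ (where $f_k=(q^k;q^k)_\infty$) to (1.5) to rewrite
$$\sum_{n\ge 0}ped_{-2}(n)\,q^n=\frac{f_4^2}{f_1^2}\equiv\frac{f_1 f_{12}}{f_3 f_4}\pmod 3.$$
Because $f_{12}/f_3\in\mathbb{Z}[[q^3]]$, the full $3$-adic complexity of $ped_{-2}(n)\bmod 3$ is concentrated in the factor $f_1/f_4$.

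Next, I would compute the $3$-dissection
$$\frac{f_1}{f_4}\equiv P_0(q^3)+q\,P_1(q^3)+q^2 P_2(q^3)\pmod 3$$
by combining Ramanujan's $3$-dissection of $1/f_1$ with the trivial $3$-dissection of $1/f_4$. Multiplying by $f_{12}/f_3$ produces explicit eta-quotients $R_r(q)$ with $\sum_{n\ge 0}ped_{-2}(3n+r)q^n\equiv R_r(q)\pmod 3$ for $r=0,1,2$. A second iteration applied to $R_2(q)$ yields the generating functions for $ped_{-2}(9n+2)$, $ped_{-2}(9n+5)$, $ped_{-2}(9n+8)$ modulo $3$; the vanishing of the third is the base case $\alpha=0$ of the first family. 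A third iteration, applied to the residue-$2$ component, further refines $ped_{-2}(27m+2)$, $ped_{-2}(27m+11)$, $ped_{-2}(27m+20)$, and the middle one gives the base case $\alpha=0$ of the second family.

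For the induction, I would look for a single self-similarity identity: that the coefficient of $q^2$ in the $3$-dissection of whichever residue-$2$ component is currently under consideration coincides, up to a unit in $\mathbb{F}_3^\times$ and an eta-quotient factor, with the original series $f_1 f_{12}/(f_3 f_4)$ modulo $3$. This identity realises the recursion $a_{\alpha+1}=9a_\alpha+2$, which is satisfied by both sequences of shift constants $a_\alpha=(11\cdot3^{2\alpha+1}-1)/4$ and $a_\alpha=(5\cdot3^{2\alpha+2}-1)/4$, at the level of generating functions; combined with the two base-case vanishings, it delivers both infinite families at once.

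The principal obstacle is producing and verifying the explicit self-similarity identity for $f_1 f_{12}/(f_3 f_4)$ modulo $3$: this reduces to a concrete but delicate piece of $q$-series bookkeeping, which may be clarified by interpreting the relevant pieces as theta series of binary quadratic forms of small discriminant. Numerical sanity checks are consistent with the $\alpha=0$ cases: $ped_{-2}(8)=144\equiv 0$ and $ped_{-2}(11)=522\equiv 0\pmod 3$.
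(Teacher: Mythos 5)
This theorem is not proved in the paper at all: it is quoted verbatim from Lin \cite{Lin} as background, so there is no internal proof to compare your argument against. That said, your blueprint --- reduce modulo $3$ via $f_k^3\equiv f_{3k}\pmod{3}$, extract arithmetic progressions by $3$-dissection, and close an induction on $\alpha$ through the recursion $a_{\alpha+1}=9a_\alpha+2$ --- is the standard route to such families and is essentially the shape of Lin's own argument; your identification of the base cases ($9n+8$ and $27n+11$) and your numerical checks are correct.

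Two genuine gaps remain. First, $1/f_4$ has no trivial $3$-dissection: the exponents occurring in $f_4=\prod(1-q^{4n})$ are the multiples of $4$, which hit every residue class modulo $3$, so you need an actual Ramanujan-type dissection of $1/f_1$ evaluated at $q^4$ and must then multiply out all cross terms against the dissection of $f_1$. Second, and more seriously, the self-similarity identity on which your induction rests cannot hold in the form you propose. An exact self-similarity $\sum_n ped_{-2}(9n+2)q^n\equiv c\sum_n ped_{-2}(n)q^n\pmod{3}$ is already refuted by the data $ped_{-2}(0)=1$, $ped_{-2}(1)=2$, $ped_{-2}(2)=5$, $ped_{-2}(11)=522$: comparing constant terms forces $c\equiv 2$, while comparing coefficients of $q$ forces $2c\equiv 0\pmod{3}$. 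If instead the identity only holds up to a nontrivial eta-quotient factor $E(q)$, then multiplication by $E$ mixes coefficients, and the vanishing of $ped_{-2}$ on an arithmetic progression at level $\alpha$ does not transfer to level $\alpha+1$; the induction step collapses. What is actually needed is an auxiliary eta-quotient $A(q)$ with $\sum_n ped_{-2}(3n+2)q^n\equiv c_1A(q)\pmod{3}$, together with (i) a proof that the relevant component of the dissection of $A$ reproduces $c_2A(q)$ itself, and (ii) a proof that the two components of $A$ carrying the base cases (the extractions along $3n+2$ and $9n+3$ of $A$) vanish identically modulo $3$. The induction must be carried by $A$, not by the $ped_{-2}$ series. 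Until that identity is stated and verified, your proposal is a plausible plan rather than a proof.
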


\begin{thm}[\cite{Lin}]
$ped_{-2}(n)$ is even unless $n$ is of the form $k(k+1)$ for some $k\geq 0$. Furthermore, $ped_{-2}(n)$ is a multiple of $4$ if $n$ is not the sum of two triangular numbers.
\end{thm}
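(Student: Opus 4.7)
The plan is to exploit the factorization $\frac{(q^4;q^4)^2_{\infty}}{(q;q)^2_{\infty}}=\bigl(\frac{(q^4;q^4)_{\infty}}{(q;q)_{\infty}}\bigr)^2$, which identifies the generating function of $ped_{-2}(n)$ as the square of the generating function of $ped(n)$, combined with the classical triangular-number identity
$$\sum_{k\geq 0} q^{k(k+1)/2} \;=\; \frac{(q^2;q^2)_{\infty}^2}{(q;q)_{\infty}}.$$
Thus the theorem reduces to a two-step analysis: first understand $ped(n)$ modulo $2$, then square and track what survives modulo $4$.

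First I would establish the convolution $ped_{-2}(n) = \sum_{k=0}^{n} ped(k)\,ped(n-k)$ straight from (1.1) and (1.5). Next, using $(q^2;q^2)_{\infty}^2 \equiv (q^4;q^4)_{\infty} \pmod{2}$ together with the triangular identity above, I obtain
$$\sum_{n\geq 0} ped(n)\,q^n \;=\; \frac{(q^4;q^4)_{\infty}}{(q;q)_{\infty}} \;\equiv\; \sum_{k\geq 0} q^{k(k+1)/2} \pmod{2},$$
which shows $ped(n)$ is odd precisely when $n$ is a triangular number.

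Now I would write $\sum ped(n)\,q^n = T(q) + 2 D(q)$, where $T(q) := \sum_{k\geq 0} q^{k(k+1)/2}$ and $D(q)\in\mathbb{Z}[[q]]$. Squaring,
$$\sum_{n\geq 0} ped_{-2}(n)\,q^n \;=\; T(q)^2 + 4\,T(q)D(q) + 4\,D(q)^2 \;\equiv\; T(q)^2 \pmod{4}.$$
Hence $ped_{-2}(n) \equiv r(n) \pmod{4}$, where $r(n)$ is the number of ordered pairs $(a,b)$ of nonnegative integers with $\frac{a(a+1)}{2}+\frac{b(b+1)}{2}=n$. If $n$ is not a sum of two triangular numbers, then $r(n)=0$, giving $ped_{-2}(n)\equiv 0\pmod{4}$, the second assertion. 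For the first assertion, I would use the involution $(a,b)\mapsto(b,a)$ on the set counted by $r(n)$: contributions pair off modulo $2$ except for fixed points $a=b$, which correspond to $n = 2\cdot\frac{a(a+1)}{2} = a(a+1)$. Therefore $r(n)$ is odd iff $n=a(a+1)$, and the same holds for $ped_{-2}(n)$.

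The reasoning is essentially routine once the squaring trick is spotted; the only mild obstacle is the correct bookkeeping of the $2 D(q)$ term, since one must verify that after squaring, all error terms carry a factor of $4$ so that the mod-$4$ reduction collapses cleanly to $T(q)^2$. No genuinely new machinery beyond Jacobi-type identities and the Frobenius congruence $(x;x)_{\infty}^2 \equiv (x^2;x^2)_{\infty} \pmod{2}$ is required.
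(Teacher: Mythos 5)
Your argument is correct, but note that the paper itself offers no proof of this statement: it is quoted as a theorem of Lin \cite{Lin}, so there is nothing internal to compare against line by line. Your route is a clean, self-contained one: the observation that the generating function (1.5) is the square of (1.1), the Frobenius congruence $(q^2;q^2)_{\infty}^2\equiv(q^4;q^4)_{\infty}\pmod 2$ combined with Gauss's identity to get $\sum ped(n)q^n\equiv\sum_{k\geq 0}q^{k(k+1)/2}\pmod 2$, the collapse of $P(q)^2=T(q)^2+4T(q)D(q)+4D(q)^2$ to $T(q)^2$ modulo $4$, and the swap involution to extract the parity of $r(n)$ (using that $a\mapsto a(a+1)$ is injective, so there is at most one fixed point) are all sound; the division by the unit $(q;q)_{\infty}$ preserves congruences in $\mathbb{Z}[[q]]$, so the one delicate step is fine. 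It is worth remarking that the machinery the paper actually develops in Section 3 would also deliver both assertions as by-products of the finer mod $8$ analysis: there, $ped_{-2}(n)$ is odd exactly when $4n+1$ is an odd square, which is equivalent to $n=k(k+1)$, and the mod $4$ statement corresponds to the representability of $4n+1$ by $x^2+y^2$, which (since $(2a+1)^2+(2b+1)^2=8n+2$) encodes the sum-of-two-triangular-numbers condition. Your approach buys elementarity and brevity for the mod $2$ and mod $4$ statements; the paper's theta-function/quadratic-form approach is what is needed to push to mod $8$, where the squaring trick alone no longer suffices because the cross term $4T(q)D(q)$ survives.
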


As a corollary of Theorem 1.3 and 1.4, Lin proved an infinite family of congruences for $ped_{-2}(n)$ modulo $12$:
$$ped_{-2}\left(3^{2\alpha+2}n+\frac{11\times 3^{2\alpha+1}-1}{4}\right)\equiv 0  \quad (\Mod 12),$$
for any integers $\alpha\geq 0$ and $n\geq 0$.

As in \cite{O-P}, our second main achievement is to examine $ped_{-2}(n)$ modulo $8$ in detail.

\begin{thm}
If $n$ is a non-negative integer, then let $N$ and $M$ be the unique positive integers for which
$$4n+1=N^2M$$
where $M$ is square-free. Then the following are true:
\begin{itemize}
  \item [(1)] If $M=1$, then $ped_{-2}(n)$ is odd.
  \item [(2)] If $M=p$, and ord$_p(4n+1)\equiv 1 (\Mod 4)$, then $ped_{-2}(n)\equiv 2 (\Mod 4)$.
  \item [(3)] If $M=p$, and ord$_p(4n+1)\equiv 3 (\Mod 8)$, then $ped_{-2}(n)\equiv 4 (\Mod 8)$.
  \item [(4)] If $M=p_1p_2$, where $p_1$ and $p_2$ are distinct primes, $p_i\equiv 1 (\Mod 4)$ and ord$_{p_i}(4n+1)\equiv 1 (\Mod 4)$ for $i=1,2$, then $ped_{-2}(n)\equiv 4(\Mod 8)$.
  \item [(5)] In all other cases we have that $ped_{-2}(n)\equiv 0(\Mod 8).$
\end{itemize}

\end{thm}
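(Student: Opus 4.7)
The plan is to derive a generating-function congruence for $A(q):=\sum_{n\geq 0}ped_{-2}(n)q^n=(q^4;q^4)_\infty^2/(q;q)_\infty^2$ modulo $8$ in terms of theta-type series encoding representations of $4n+1$ by $x^2+y^2$, and then case-analyze using the multiplicative theory of $r_2(N)=\#\{(x,y)\in\mathbb{Z}^2:x^2+y^2=N\}$ coming from the factorization of $N$ in $\mathbb{Z}[i]$.

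First I would establish the mod-$4$ identity $A(q)\equiv\psi(q)^2\pmod 4$, where $\psi(q):=(q^2;q^2)_\infty^2/(q;q)_\infty=\sum_{n\geq 0}q^{n(n+1)/2}$. From the elementary identity $(1-q^m)^4=(1-q^{2m})^2-4q^m(1-q^m)^2$ one gets $(q;q)_\infty^4\equiv(q^2;q^2)_\infty^2\pmod 4$, and replacing $q$ by $q^2$ also gives $(q^2;q^2)_\infty^4\equiv(q^4;q^4)_\infty^2\pmod 4$; dividing $\psi(q)^2=(q^2;q^2)_\infty^4/(q;q)_\infty^2$ by $A(q)=(q^4;q^4)_\infty^2/(q;q)_\infty^2$ yields $A(q)/\psi(q)^2\equiv 1\pmod 4$. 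Since $\psi(q)^2=\varphi(q)\psi(q^2)$ with $\varphi(q)=\sum_{m\in\mathbb Z}q^{m^2}$, and the substitution $(m,k)\mapsto(2m,2k+1)$ identifies pairs with $m^2+k(k+1)=n$ with representations $4n+1=(2m)^2+(2k+1)^2$, one obtains $[q^n]\psi(q)^2=r_2(4n+1)/4$. Hence
\[
ped_{-2}(n)\equiv\frac{r_2(4n+1)}{4}\pmod 4.
\]
Combined with the multiplicative formula $r_2(N)/4=\sum_{d\mid N}\chi_{-4}(d)$ (coming from unique factorization in $\mathbb Z[i]$), this settles cases (1) and (2) immediately, and reduces (3), (4), (5) to distinguishing $0$ from $4$ modulo $8$.

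Next I would sharpen to mod $8$. Expanding the product $(q;q)_\infty^4=\prod_m[(1-q^{2m})^2-4q^m(1-q^m)^2]$ one order further, and noting that any two correction factors contribute a multiple of $16$, I obtain
\[
(q;q)_\infty^4\equiv(q^2;q^2)_\infty^2\Bigl(1-4\sum_{m\geq 1}\frac{q^m}{(1+q^m)^2}\Bigr)\pmod 8.
\]
Inverting modulo $8$ and substituting into $A(q)=(q^4;q^4)_\infty^2(q;q)_\infty^2/(q;q)_\infty^4$ yields
\[
A(q)\equiv\varphi(-q)\psi(q^2)\bigl(1+4S(q)\bigr)\pmod 8,\qquad S(q):=\sum_{m\geq 1}\frac{q^m}{(1+q^m)^2},
\]
and using the identity $\psi(q)^2-\varphi(-q)\psi(q^2)=4\,O(q)$ (where $[q^n]O(q)$ counts representations $4n+1=a^2+b^2$ with $a\equiv 2\pmod 4$ and $b$ odd positive) this rearranges to
\[
A(q)\equiv\psi(q)^2+4\bigl(\psi(q)^2S(q)-O(q)\bigr)\pmod 8.
\]

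Finally I would perform the case analysis. A direct multiplicative computation with $r_2(N)/4$ using the factorization $4n+1=N^2M$ shows $r_2(4n+1)/4\equiv 4\pmod 8$ in cases (3) and (4), while $r_2(4n+1)/4\equiv 0\pmod 8$ in case (5). In each of (3), (4), (5) it therefore suffices to verify $[q^n]\bigl(\psi(q)^2S(q)-O(q)\bigr)\equiv 0\pmod 2$. The formula $[q^N]S(q)=\sum_{d\mid N}(-1)^{d-1}d$ shows this coefficient is odd exactly when the odd part of $N$ is a perfect square, and comparing this parity with the enumeration of $O(4n+1)$ via the Gaussian factorization of $4n+1$ yields the required vanishing. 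The main obstacle is this final parity step: the series $\psi(q)^2S(q)-O(q)$ is not itself a theta series, so its parity on the arithmetic sets picked out by the factorization $4n+1=N^2M$ must be established by combining divisor-sum parities with the multiplicative structure of $r_2$ coming from $\mathbb{Z}[i]$.
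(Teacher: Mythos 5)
Your setup is sound and, through the modulus $4$, your argument is complete: the congruence $A(q)\equiv\psi(q)^2\pmod 4$ together with $[q^n]\psi(q)^2=r_2(4n+1)/4=\sum_{d\mid 4n+1}\chi_{-4}(d)$ does dispose of cases (1) and (2), and your mod-$8$ expansion $A(q)\equiv\varphi(-q)\psi(q^2)(1+4S(q))\equiv\psi(q)^2+4\bigl(\psi(q)^2S(q)-O(q)\bigr)\pmod 8$ is a correct (if differently packaged) version of the paper's congruence. But there is a genuine gap at exactly the point you flag yourself: cases (3), (4), (5) reduce to the claim that $[q^n]\bigl(\psi(q)^2S(q)-O(q)\bigr)$ is even for the relevant $n$, and you never prove this. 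The parity statement you do prove concerns $[q^N]S(q)$ alone (odd iff the odd part of $N$ is a square); what you need is the parity of the \emph{convolution} $\sum_{k\ge 1}t(n-k)s(k)$ with $t(j)=r_2(4j+1)/4$, which is a lattice-point count, not a divisor-sum parity, and "comparing with the enumeration of $O$" is an assertion, not an argument. Since this term is precisely what separates $0$ from $4$ modulo $8$, the theorem's cases (3)--(5) are not established.

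The contrast with the paper shows what is missing. The paper expands $1/\varphi(-q^4)$ modulo $8$ so that every term is a genuine theta series: it arrives at $ped_{-2}(n)\equiv\frac{a(n)}{2}+b(n)-c(n)\pmod 8$, where $a,b,c$ count representations of $4n+1$ by $x^2+4y^2$, $x^2+8y^2$, $x^2+16y^2$ with $x$ odd; each of these is then evaluated multiplicatively from the factorization of $4n+1$ in $\mathbb{Z}[i]$ and in $\mathbb{Z}[\sqrt{-2}]$. Your correction term $\psi(q)^2S(q)-O(q)$ is congruent mod $2$ to $(b(n)-c(n))/4$, so it \emph{can} be tamed, but only by converting it back into theta series: e.g.\ use $\psi(q)^2\equiv\psi(q^2)\pmod 2$ and $S(q)\equiv\sum_{k\ge 1}\bigl(q^{k^2}+q^{2k^2}\bigr)\pmod 2$, which turns $[q^n]\psi(q)^2S(q)$ into counts of representations $4n+1=(2j+1)^2+4k^2$ and $4n+1=(2j+1)^2+8k^2$ with $k\ge 1$. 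At that point the form $x^2+2y^2$ and the arithmetic of $\mathbb{Z}[\sqrt{-2}]$ enter unavoidably -- your proposal never invokes them -- and the remaining case analysis is essentially the paper's. So the route is salvageable, but the step you defer is the mathematical heart of the mod-$8$ refinement, not a routine verification.
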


As a corollary to Theorem 1.5. we can show infinite families of congruences for $ped_{-2}(n)$ modulo $8$.

\begin{cor}
\begin{itemize}
  \item [(i)] For $p\equiv 3 (\Mod 4)$, and let $1\leq r< 4p$, $(r,p)=1$, $rp\equiv 1 (\Mod 4)$, then we have
  $$ped_{-2}\left(p^{2\alpha+2}n+\frac{r\times p^{2\alpha+1}-1}{4}\right)\equiv 0  \quad (\Mod 8), \eqno(1.6)$$
  for $\alpha\geq 0$ and any $n\geq 0$.
  \item [(ii)] For $p\equiv 1 (\Mod 4)$, and let $1\leq r< 4p$, $(r,p)=1$, $rp\equiv 1 (\Mod 4)$, then we have
  $$ped_{-2}\left(p^{8\alpha+8}n+\frac{r\times p^{8\alpha+7}-1}{4}\right)\equiv 0  \quad (\Mod 8),\eqno(1.7)$$
  for $\alpha\geq 0$ and any $n\geq 0$.
\end{itemize}

\end{cor}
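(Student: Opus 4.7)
The plan is to deduce Corollary~1.7 directly from Theorem~1.5 by analyzing the squarefree part of $4m+1$, where $m$ denotes the argument of $ped_{-2}$ on the left-hand side of each congruence. In both parts, a direct computation gives $4m+1 = p^{e}(4pn+r)$, with $e = 2\alpha+1$ in part~(i) and $e = 8\alpha+7$ in part~(ii). Because $p \nmid r$, we also have $p \nmid 4pn+r$. Writing the cofactor $s := 4pn+r = a^{2}b$ with $b$ squarefree (and hence coprime to $p$), we obtain $4m+1 = (p^{(e-1)/2}a)^{2}\cdot(pb)$, so the squarefree part of $4m+1$ is $M=pb$ and $N=p^{(e-1)/2}a$.

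For part~(i), the hypothesis $p\equiv 3\ (\Mod 4)$ combined with $rp\equiv 1\ (\Mod 4)$ forces $r\equiv 3\ (\Mod 4)$, and therefore $s\equiv 3\ (\Mod 4)$. Since every perfect square is $\equiv 0$ or $1\ (\Mod 4)$, the integer $s$ cannot be a square, so $b\geq 2$. Consequently $M=pb$ has at least two distinct prime factors, one of which is $p\equiv 3\ (\Mod 4)$. This rules out cases~(1)--(3) of Theorem~1.5 (each requiring $M=1$ or $M$ prime) and also case~(4) (which demands that every prime factor of $M$ be $\equiv 1\ (\Mod 4)$). Hence we land in case~(5), giving $ped_{-2}(m)\equiv 0\ (\Mod 8)$.

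For part~(ii), the exponent is chosen so that $\mathrm{ord}_{p}(4m+1)=8\alpha+7\equiv 7\ (\Mod 8)$, which is neither $\equiv 1\ (\Mod 4)$ nor $\equiv 3\ (\Mod 8)$. A short case split on $b$ then covers everything: if $b=1$, then $M=p$ but the order evades cases~(2) and~(3); if $b$ is a single prime distinct from $p$, then $M=pb$ and case~(4) fails because $\mathrm{ord}_{p}(4m+1)\not\equiv 1\ (\Mod 4)$; if $b$ has at least two prime factors, then $M$ has at least three, and none of cases~(1)--(4) apply. In every scenario case~(5) yields $ped_{-2}(m)\equiv 0\ (\Mod 8)$. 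The work is essentially bookkeeping, and the main (mild) subtlety is recognizing that the parameters in the corollary are reverse-engineered from Theorem~1.5: the condition $r\equiv 3\ (\Mod 4)$ in~(i) and the specific exponent $8\alpha+7$ in~(ii) are precisely what is needed to block every case of the theorem that would produce a non-zero residue modulo~$8$.
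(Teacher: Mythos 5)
Your proposal is correct and follows essentially the same route as the paper: both compute $4m+1=p^{e}(4pn+r)$ and then invoke the mod-$8$ classification theorem, with the paper declaring the conclusion ``obvious'' while you carefully identify the squarefree part $M=pb$ and check that every case other than the final one is excluded. Your write-up is in fact more complete than the paper's, since it treats part (i) explicitly (using $r\equiv 3\ (\Mod 4)$ to force $b>1$) and spells out the case split on $b$ in part (ii), both of which the paper omits.
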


\begin{proof}
Since the proof of (i) and (ii) are similar, we only give the proof of (ii). Note that
$$4\left(p^{8\alpha+8}n+\frac{r\times p^{8\alpha+7}-1}{4}\right)+1=p^{8\alpha+7}(4pn+r),$$
so for any $n$, then from Theorem 1.7, (1.7) is obvious.

\end{proof}

\begin{example}
For $r=7,11$, for all $n\geq 0$, $\alpha\geq 0$, we have
$$ped_{-2}\left(3^{2\alpha+2}n+\frac{r\times 3^{2\alpha+1}-1}{4}\right)\equiv 0  \quad (\Mod 8).$$
Combining Lin's Theorem 1.5, we have
$$ped_{-2}\left(3^{2\alpha+2}n+\frac{11\times 3^{2\alpha+1}-1}{4}\right)\equiv 0  \quad (\Mod 24).$$
\end{example}

In section 2, we prove Theorem 1.2 and  in section 3 we prove Theorem 1.7. Our method is elementary which is motivated by that of Mahlburg's \cite{Ma}. We only use the knowledge of binary quadratic forms.

\vspace{2ex} {\bf \noindent Acknowledgments.} \\
   We wish to thank the NSF of China (No.11071160) for its generous support. We would also like to thank the referee for his/her helpful comments.

\section{Proof of Theorem 1.2}
In this section, we prove Theorem 1.2. We need the following well-known identities \cite{On}:
\setcounter{equation}{0}
\begin{eqnarray}
\prod_{n=1}^{\infty}\frac{(1-q^n)^2}{(1-q^{2n})}&=&\sum_{n=-\infty}^{\infty}(-1)^nq^{n^2}=1+2\sum_{n=1}^{\infty}(-1)^nq^{n^2},\\
q\prod_{n=1}^{\infty}\frac{(1-q^{16n})^2}{(1-q^{8n})}&=&\sum_{n=0}^{\infty}q^{(2n+1)^2}.
\end{eqnarray}

\begin{proof}[Proof of Theorem 1.2]
Since
   $$\sum_{n=0}^{\infty}ped(n)q^n: =\prod_{n=1}^{\infty}\frac{(1-q^{4n})}{(1-q^n)}=\prod_{n=1}^{\infty}\frac{(1-q^{2n})^2}{(1-q^n)}\cdot \frac{(1-q^{4n})}{(1-q^{2n})^2} ,$$
so we have
\begin{eqnarray*}
   & &\sum_{n=0}^{\infty}ped(n)q^{8n+1}\\
   &=& q\prod_{n=1}^{\infty}\frac{(1-q^{16n})^2}{1-q^{8n}}\cdot \frac{(1-q^{32n})}{(1-q^{16n})^2}\\
   &=& \left(\sum_{n=0}^{\infty}q^{(2n+1)^2}\right)\cdot \frac{1}{1+2\sum_{n=1}^{\infty}(-1)^n q^{16n^2}} \quad  \quad (\text{by}\ (2.1),(2.2))\\
   &\equiv& \left(\sum_{n=0}^{\infty}q^{(2n+1)^2}\right) \left(1-2\sum_{n=1}^{\infty}(-1)^nq^{16n^2}+4\left(\sum_{n=1}^{\infty}(-1)^nq^{16n^2}\right)^2\right) (\Mod 8).
\end{eqnarray*}
From above, it is clear that $ped(n)$ is odd if and only if $8n+1$ is a square. Note that
\begin{eqnarray*}
   \left(\sum_{n=1}^{\infty}(-1)^nq^{16n^2}\right)^2
   &=&\left(\sum_{m_1,m_2=1}^{\infty}(-1)^{m_1+m_2}q^{16m_1^2+16m_2^2}\right)\\
   &=& 2\left(\sum_{m_1,m_2=1 \atop m_1< m_2}^{\infty}(-1)^{m_1+m_2}q^{16m_1^2+16m_2^2}\right)
    +\left(\sum_{n=1}^{\infty}q^{32n^2}\right).
                \end{eqnarray*}
   So we have
   \begin{eqnarray}
   \sum_{n=0}^{\infty}ped(n)q^{8n+1}
   &\equiv&\left(1-2\sum_{n=1}^{\infty}(-1)^nq^{16n^2}+4\left(\sum_{n=1}^{\infty}q^{32n^2}\right)\right) \\
     &&\times\left(\sum_{n=0}^{\infty}q^{(2n+1)^2}\right)\quad  (\Mod 8).
                \end{eqnarray}
 Note that if $8n+1=p^{2\alpha+1}(8pm+r)$ where $p\equiv 7 (\Mod 8)$, $(r,p)=1$, $rp\equiv 1 (\Mod 8)$, then $8n+1$ can't be represented by $x^2$, $x^2+y^2$ or $x^2+2y^2$. So for these $8n+1$,  from (2.3) and (2.4), it is easy to see that $ped(n)\equiv 0 (\Mod 8)$. This completes the proof of Theorem 1.2.
 \end{proof}

\section{Proof of Theorem 1.7}

In this section, we prove Theorem 1.7. The method is similar to that of Theorem 1.2.

\begin{proof}[Proof of Theorem 1.7]
Since
   $$\sum_{n=0}^{\infty}ped_{-2}(n)q^n: =\prod_{n=1}^{\infty}\frac{(1-q^{4n})^2}{(1-q^n)^2}=\prod_{n=1}^{\infty}\frac{(1-q^{4n})^2}{(1-q^{2n})}\cdot \frac{(1-q^{2n})}{(1-q^{n})^2} ,$$
so by (2.1) and (2.2), we have
\begin{eqnarray*}
\sum_{n=0}^{\infty}ped_{-2}(n)q^{4n+1}
   &=& q\prod_{n=1}^{\infty}\frac{(1-q^{16n})^2}{1-q^{8n}}\cdot \frac{(1-q^{8n})}{(1-q^{4n})^2}\\
   &=&\frac{1}{1+2\sum_{n=1}^{\infty}(-1)^n q^{4n^2}}\times  \left(\sum_{n=0}^{\infty}q^{(2n+1)^2}\right)\\
   &\equiv& \left(1-2\sum_{n=1}^{\infty}(-1)^nq^{4n^2}+4\left(\sum_{n=1}^{\infty}(-1)^nq^{4n^2}\right)^2\right)\\
    &&\times\left(\sum_{n=0}^{\infty}q^{(2n+1)^2}\right)\quad  (\Mod 8).
\end{eqnarray*}
Note that
\begin{eqnarray*}
\left(\sum_{n=1}^{\infty}(-1)^nq^{4n^2}\right)^2
   &=& \left(\sum_{m_1,m_2=1}^{\infty}(-1)^{m_1+m_2}q^{4m_1^2+4m_2^2}\right)\\
   &=& 2\left(\sum_{m_1,m_2=1 \atop m_1< m_2}^{\infty}(-1)^{m_1+m_2}q^{4m_1^2+4m_2^2}\right)+\sum_{n=1}^{\infty}q^{8n^2}\\
   &=&2\left(\sum_{m_1,m_2=1 \atop m_1< m_2}^{\infty}(-1)^{m_1+m_2}q^{4m_1^2+4m_2^2}\right)+\frac{1}{2}\left(\sum_{n=\infty}^{\infty}q^{8n^2}-1\right).
                \end{eqnarray*}

 Note also that
\begin{eqnarray*}
                              \sum_{n=1}^{\infty}(-1)^nq^{4n^2}
                               &=& \frac{1}{2}\left(\sum_{n=-\infty}^{\infty}(-1)^nq^{4n^2}-1\right)\\
                               &=& -\frac{1}{2}-\frac{1}{2}\left(\sum_{n=-\infty}^{\infty}q^{4n^2}\right)
                                 +\left(\sum_{n=-\infty}^{\infty}q^{16n^2}\right).
                \end{eqnarray*}
 So we have
 \begin{eqnarray}
 \sum_{n=0}^{\infty}ped_{-2}(n)q^{4n+1}&\equiv&
 \left(\sum_{n=-\infty}^{\infty}q^{4n^2}-2\sum_{n=-\infty}^{\infty}q^{16n^2}+2\sum_{n=\infty}^{\infty}q^{8n^2}\right)\nonumber\\
                                    &&   \times\frac{1}{2}\left(\sum_{n=\infty}^{\infty}q^{(2n+1)^2}\right) \nonumber\\
                                     &\equiv& \left(\frac{1}{2}\sum_{n=-\infty}^{\infty}q^{4n^2}+\sum_{n=\infty}^{\infty}q^{8n^2}-\sum_{n=-\infty}^{\infty}q^{16n^2}\right)\\
                                      & &\times \left(\sum_{n=\infty}^{\infty}q^{(2n+1)^2}\right)\quad  (\Mod 8).
\end{eqnarray}
If we put
   $$\left(\sum_{n=\infty}^{\infty}q^{(2n+1)^2}\right)\left(\sum_{n=\infty}^{\infty}q^{4n^2}\right):=\sum_{n=0}^{\infty}a(n)q^{4n+1},\eqno(3.3)$$
 $$\left(\sum_{n=\infty}^{\infty}q^{(2n+1)^2}\right)\left(\sum_{n=\infty}^{\infty}q^{8n^2}\right):=\sum_{n=0}^{\infty}b(n)q^{4n+1},\eqno(3.4)$$
 and
 $$\left(\sum_{n=\infty}^{\infty}q^{(2n+1)^2}\right)\left(\sum_{n=\infty}^{\infty}q^{16n^2}\right):=\sum_{n=0}^{\infty}c(n)q^{4n+1},\eqno(3.5)$$
then it is easy to see from (3.1), (3.2) that
\begin{itemize}
  \item [(1)] $ped_{-2}(n)$ is odd if and only if $\frac{a(n)}{2}$ is odd,
  \item [(2)] $ped_{-2}(n)\equiv 2 (\Mod 4)$ if and only if $\frac{a(n)}{2}+b(n)-c(n)\equiv 2 (\Mod 4)$,
  \item [(3)] $ped_{-2}(n)\equiv 4 (\Mod 8)$ if and only if $\frac{a(n)}{2}+b(n)-c(n)\equiv 4 (\Mod 8)$,
  \item [(4)] In all other cases, we have that $ped_{-2}(n)\equiv 0(\Mod 8)$.
\end{itemize}

We separate into two cases according to the parity of $n$.

(i) If $n=2m$ is even, then it is easy to see that
\begin{eqnarray*}
   &&\#\{(x,y)\in \mathbb{Z}\times \mathbb{Z}:8m+1=x^2+16y^2,  8m+1 \ \text{is not a square}\}\\
   &=&\#\{(x,y)\in \mathbb{Z}\times \mathbb{Z}:8m+1=x^2+4y^2,  8m+1 \ \text{is not a square}\}\\
   &=& \frac{1}{2}\#\{(x,y)\in \mathbb{Z}\times \mathbb{Z}:8m+1=x^2+y^2,  8m+1 \ \text{is not a square}\},
\end{eqnarray*}
and
\begin{eqnarray*}
   &&\#\{(x,y)\in \mathbb{Z}\times \mathbb{Z}:8m+1=x^2+8y^2,  8m+1 \ \text{is not a square}\}\\
   &=&\#\{(x,y)\in \mathbb{Z}\times \mathbb{Z}:8m+1=x^2+2y^2,  8m+1 \ \text{is not a square}\}
\end{eqnarray*}
Note that if $8m+1=p_1^{\alpha_1}\cdots p_k^{\alpha_k}q_1^{\beta_1}\cdots q_l^{\beta_l}r_1^{\gamma_1}\cdots r_u^{\gamma_u}s_1^{\delta_1}\cdots s_v^{\delta_v}$, where $p_i\equiv 1$ $(\Mod 8)$, $q_i\equiv 3 (\Mod 8)$, $r_i\equiv 5 (\Mod 8)$ and $s_i\equiv 7 (\Mod 8)$, then by using the decomposition of prime ideals in $\mathbb{Z}[i]$, we know that
   $8m+1=x^2+y^2$ has integral solutions if and only if $\beta_j,\gamma_t\equiv 0 (\Mod 2)$ for $1\leq j\leq l$ and $1\leq t\leq u$. If all $\beta_j,\delta_t$ are even, then it is easy to see that
   $$\#\{(x,y)\in \mathbb{Z}\times \mathbb{Z}:8m+1=x^2+y^2\}=4(\alpha_1+1)\cdots (\alpha_k+1)(\gamma_1+1)\cdots(\gamma_u+1).$$
   We obtain that $a(n)=2(\alpha_1+1)\cdots (\alpha_k+1)(\gamma_1+1)\cdots(\gamma_u+1)$ if all $\beta_j,\gamma_t$ are even. Now it is clear that $\frac{a(n)}{2}$ is odd if and only if $4n+1=8m+1$ is a square.

   Similarly, by using the decomposition of prime ideals in $\mathbb{Z}[\sqrt{-2}]$, we know that if all $\gamma_j,\delta_t\equiv 0(\Mod 2)$, then (there are only two roots of unities $\pm 1$ in $\mathbb{Z}[\sqrt{-2}]$)
   $$\#\{(x,y)\in \mathbb{Z}\times \mathbb{Z}:8m+1=x^2+2y^2\}=2(\alpha_1+1)\cdots (\alpha_k+1)(\beta_1+1)\cdots(\beta_l+1).$$
   From the above argument, we obtain the following results. Suppose all $\delta_i$ are even, then
   \begin{itemize}
     \item [(a)] If $\beta_i$ is odd for some $1\leq i\leq l$ and all $\gamma_j$ is even, then $\frac{a(n)}{2}+b(n)-c(n)\equiv b(n)\equiv 2(\alpha_1+1)\cdots (\alpha_k+1)(\beta_1+1)\cdots(\beta_l+1)\equiv 0(\Mod 8)$ (because there must be another $\beta_{i'}$ odd for $p_i\equiv 3 (\Mod 8)$ while $8m+1\equiv 1 (\Mod 8)$).
     \item [(b)] If $\gamma_i$ is odd for some $1\leq i\leq u$ and all $\beta_j$ is even, then $\frac{a(n)}{2}+b(n)-c(n)\equiv \frac{a(n)}{2}-c(n)\equiv -(\alpha_1+1)\cdots (\alpha_k+1)(\gamma_1+1)\cdots(\gamma_u+1)(\Mod 8)$.
     \item [(c)] If all $\beta_i$ and $\gamma_j$ are even, then  $\frac{a(n)}{2}+b(n)-c(n)\equiv -(\alpha_1+1)\cdots (\alpha_k+1)(\gamma_1+1)\cdots(\gamma_u+1)+2(\alpha_1+1)\cdots (\alpha_k+1)(\beta_1+1)\cdots(\beta_l+1) (\Mod 8)$.
      \item [(d)]  If $\beta_i$ and $\gamma_j$ are odd for some $i$ and $j$, then clearly $ped_{-2}(n)\equiv 0 (\Mod 8)$.
   \end{itemize}
From (a),(b),(c) and (d), it is not difficult to see that (for $n=2m$)
\begin{itemize}
  \item [(1)] if $4n+1$ is a square, then $ped_{-2}(n)$ is odd,
  \item [(2)] if $4n+1=pa^2$ where ord$_p(4n+1)\equiv 1 (\Mod 4)$, then $ped_{-2}(n)\equiv 2 (\Mod 4)$,
  \item [(3)] if $4n+1=p_1p_2a^2$ where $p_i\equiv1 (\Mod 4)$ and ord$_{p_i}(4n+1)\equiv 1 (\Mod 4)$ or $4n+1=p^3a^2$ and ord$_p(4n+1)\equiv 3(\Mod 8)$, then $ped_{-2}(n)\equiv 4 (\Mod 8)$
  \item [(4)] and for all other cases, $ped_{-2}(n)\equiv 0 (\Mod 8)$.
\end{itemize}

(ii) If $n=2m+1$, then $4n+1=8m+5$. Since $(2m_1+1)^2+8m_2^2\equiv(2m_1+1)^2+16m_2^2\equiv 1(\Mod 8)$, so $4n+1$ can not be represented by $(2m_1+1)^2+8m_2^2$ and $(2m_1+1)^2+16m_2^2$. Note also that
\begin{eqnarray*}
   &&\#\{(x,y)\in \mathbb{Z}\times \mathbb{Z}:8m+5=x^2+4y^2\}\\
   &=& \frac{1}{2}\#\{(x,y)\in \mathbb{Z}\times \mathbb{Z}:8m+5=x^2+y^2 \}.
\end{eqnarray*}
So if $8m+5=p_1^{\alpha_1}\cdots p_k^{\alpha_k}q_1^{\beta_1}\cdots q_l^{\beta_l}r_1^{\gamma_1}\cdots r_u^{\gamma_u}s_1^{\delta_1}\cdots s_v^{\delta_v}$, where $p_i\equiv 1 (\Mod 8)$, $q_i\equiv 3 (\Mod 8)$, $r_i\equiv 5 (\Mod 8)$ and $s_i\equiv 7 (\Mod 8)$, then
$$\frac{a(n)}{2}\equiv (\alpha_1+1)\cdots (\alpha_k+1)(\gamma_1+1)\cdots(\gamma_u+1) \quad (\Mod 8). \eqno(3.6)$$
Just like in (i), it is easy to find case by case by (3.6) for $ped_{n} (\Mod 8)$ for $n=2m+1$:
\begin{itemize}
  \item [(1)] if $4n+1=pa^2$ where ord$_p(4n+1)\equiv 1 (\Mod 4)$, then $ped_{-2}(n)\equiv 2 (\Mod 4)$,
  \item [(2)] if $4n+1=p_1p_2a^2$ and ord$_{p_i}(4n+1)\equiv 1 (\Mod 4)$ or $4n+1=p^3a^2$ and ord$_p(4n+1)\equiv 3(\Mod 8)$, then $ped_{-2}(n)\equiv 4 (\Mod 8)$
  \item [(3)] and for all other cases, $ped_{-2}(n)\equiv 0 (\Mod 8)$.
\end{itemize}

Now combining (i) and (ii), with a little thought, we complete the proof of Theorem 1.7.

\end{proof}

\end{document}